\newcommand{\be}{\begin{equation}}
\newcommand{\ee}{\end{equation}}
\newcommand{\bea}{\begin{eqnarray}}
\newcommand{\eea}{\end{eqnarray}}
\newtheorem{thm}{Theorem}
\newtheorem{prop}{Proposition}
\newtheorem{lemma}{Lemma}
\theoremstyle{definition}
\newtheorem{rem}{Remark}
\renewcommand{\div}{{\mbox{div}\,}}
\newcommand{\ve}{{\varepsilon}}
\newcommand{\red}[1]{\textcolor{black}{#1}}
\def\wc{\rightharpoonup}
\def\curl{\text{curl}}
\def\div{\text{div}}
\newcommand{\bq}{\begin{equation}}
\newcommand{\eq}{\end{equation}}
\newcommand{\bqa}{\begin{eqnarray*}}
\newcommand{\eqa}{\end{eqnarray*}}
\newcommand{\Rr}{\mathbb{R}}
\newcommand{\om}{\omega}
\newcommand{\p}{\partial}
\title[On the Emergence of Weak Euler Solutions]{Remarks on the emergence of weak Euler \\ solutions  in the vanishing viscosity limit}
\author{Theodore D. Drivas}
\address{Department of Mathematics, Princeton University, Princeton, NJ 08544}
\email{tdrivas@math.princeton.edu}
\author{Huy Q. Nguyen}
\address{Department of Mathematics, Princeton University, Princeton, NJ 08544}
\email{qn@math.princeton.edu}
\date{today}
\begin{document}

\begin{abstract}
We prove that if the \emph{local} second-order structure function exponents in the inertial range remain positive uniformly in viscosity, then any spacetime $L^2$  weak limit of Leray--Hopf weak solutions of the Navier-Stokes equations on any bounded domain $\Omega\subset \mathbb{R}^d$, $d= 2,3$  is a weak solution of the Euler equations. This holds for both no-slip and Navier-friction conditions with viscosity-dependent slip length. The result allows for the emergence of non-unique, possibly dissipative, limiting weak solutions of the Euler equations.  
\end{abstract}

\maketitle

 \section{Introduction}
The incompressible Navier-Stokes equations governing viscous flow contained in $\Omega\subset \mathbb{R}^d$, $d\geq 2$ read
\begin{align}\label{NSb}
\partial_t u^\nu + u^\nu \cdot \nabla u^\nu  = - \nabla p^\nu  + \nu \Delta u^\nu  + f^\nu   \qquad &\text{in} \quad   \Omega, \\
\nabla \cdot u^\nu  = 0    \qquad\qquad\qquad \quad\qquad\ \  &\text{in} \quad  \Omega \times (0,T),\\
u^\nu\vert_{t=0} = u_0  \qquad\qquad\qquad \quad\qquad  &\text{in}  \quad   \Omega. \label{NSf}
\end{align}
These must be supplied with conditions on the boundary.   The most common choice is no-slip conditions
\begin{align} \label{noslip}
u^\nu = 0 \quad  \text{on}  \quad  \partial \Omega \times (0,T).
\end{align}
No-slip, or stick, boundary conditions assert that the fluid velocity matches the velocity of the boundary (which we here consider stationary).  Another possible choice is that of  Navier-friction or slip boundary conditions
\begin{align} \label{NSslip1}
2(D(u^\nu)  \hat{n})_\tau&= -\alpha^\nu u^\nu_\tau\quad \text{on}  \quad  \partial \Omega \times (0,T),\\
u^\nu \cdot \hat{n} &= 0 \quad\quad\quad \ \ \text{on}\quad  \partial \Omega \times (0,T), \label{NSslip2}
\end{align}
 where $\hat{n}:=\hat{n}(x)$ is the vector field normal to the boundary $\partial \Omega$, $D(v):= 1/2(\nabla v +( \nabla v)^t)$ is the strain tensor, $f_\tau$ denotes the tangential-to-boundary part of $f$, and   the inverse slip-length $\alpha^\nu:= \alpha^\nu(x)$ is a $C^2$ positive function uniformly--in--$\nu$ bounded from below.   The conditions \eqref{NSslip1} form $d-1$ constraints on the solution. As the name suggests, this condition allows the fluid to slip tangentially along the boundary for all $\nu>0$. The inverse slip length is often assumed to be of power-law form $\alpha^\nu=c_0 \nu^{-\beta}$ for $\beta \in[0, 1]$ and $c_0>0$.  The critical and physical setting has $\alpha^\nu =c_0\nu^{-1}$, which arises rigorously from kinetic theory considerations \cite{JM17}.

The Navier-Stokes equations, coupled with either \eqref{noslip} or \eqref{NSslip1}--\eqref{NSslip2} boundary conditions, are known to admit global weak solutions that obey an energy inequality \cite{L34}. Specifically, we denote 
 \be
H(\Omega):= \{w\in L^2(\Omega), \ {\rm div} \ w = 0, \ w\cdot \hat{n}|_{\partial\Omega} = 0 \}.
 \ee
 Then, for any $u_0^\nu\in H(\Omega)$, there exists a Leray-Hopf weak solution $u^\nu \in L_{loc}^\infty(0,\infty;L^2(\Omega))\cap L_{loc}^2(0,\infty;H^1(\Omega))$ of \eqref{NSb}--\eqref{NSf} satisfying  no-slip boundary conditions  \eqref{noslip}   in the sense that
 \be\label{weakNS}
 \int_0^T\int_\Omega  \big(u^\nu \cdot  \partial_t\varphi   +  u^\nu \otimes u^\nu  :  \nabla  \varphi  -\nu \nabla  u^\nu: \nabla \varphi \big)dxdt =0
\ee
for all solenoidal test vector fields $\varphi \in C_c^\infty((0,\infty)\times \Omega)$.  Such solutions obey the global energy inequality
\be\label{enD}
\frac{1}{2} \int_\Omega |u^\nu(t)|^2 dx\leq \frac{1}{2} \int_\Omega |u_0^\nu|^2 dx -\nu \int_0^t \int_\Omega |\nabla u^\nu(s)|^2 dx ds
\ee
for all $t\geq 0$.  In the case of Navier-friction boundary conditions \eqref{NSslip1}--\eqref{NSslip2}, there exists 
 global Leray-Hopf weak solutions $u^\nu$ (see Theorem 2 of \cite{IS11}) satisfying
 \be\label{weakNSfric}
 \int_0^T\int_\Omega  \big(u^\nu \cdot  \partial_t\varphi   +  u^\nu \otimes u^\nu  :  \nabla  \varphi  -2\nu D(  u^\nu): D( \varphi )  \big)dxdt =  \nu  \int_0^T\int_{\partial\Omega}  \alpha^\nu u^\nu \cdot \varphi dS
\ee
for all solenoidal  $\varphi \in C_c^\infty((0,\infty)\times \overline{\Omega})$ and  the global energy inequality for all $t\geq 0$
\begin{align}
\frac{1}{2} \int_\Omega |u^\nu(t)|^2 dx&\leq \frac{1}{2} \int_\Omega |u_0^\nu|^2 dx -2\nu \int_{0}^t \int_\Omega |D(  u^\nu(s))|^2 dx ds   - \nu    \int_{0}^t \int_{\partial \Omega}\alpha^\nu|u^\nu|^2  dSds .    \label{enN}
\end{align}

The nature of the inviscid limit $\nu\to 0$ is an important issue in the study of fluid dynamics. On domains without boundary such as $\mathbb{T}^d$ or $\mathbb{R}^d$, if a strong Euler solution  $u^E$ exists for the same initial data, then it is well known that any sequence of Leray solutions of Navier-Stokes converges to that Euler solution in the energy norm,  i.e. strong convergence of $u^\nu$ to $u^E$ in $L_t^\infty L^2_x$ \cite{K72,P86}.
 On domains with boundary, even when a strong Euler solution exists in the background, this question is widely open.  Most of the known results establish the strong inviscid limit under a variety of conditions, see e.g. ~\cite{kato,BardosTiti13, ceiv,ConstantinKukavicaVicol15,Kelliher07,TemamWang97b,Kelliher06}.    For no-slip conditions, some unconditional results are known, but only for short times and under special simplifying assumptions such as analytic data~\cite{SammartinoCaflisch98b}, vanishing of the initial vorticity in a neighborhood of the boundary~\cite{Maekawa14}, \red{near shear \cite{GV16},} or special symmetries on the flow~\cite{Kelliher09,LopesMazzucatoLopesTaylor08,MazzucatoTaylor08}. These unconditional results hold for laminar flows, before any boundary layer separation or other characteristic turbulent behavior can occur.  

For turbulent flows with and without boundaries, the connection between Euler and the inviscid limit of Navier-Stokes remains a fundamental open problem. On domains with boundary, the lack of information on convergence even in the presence of a strong solution of Euler for arbitrary finite times motivates the need to establish the inviscid limit to some weaker notion of Euler solution.  One such notion, guaranteed to exist along suitable subsequences $\nu_n\to 0$, is that of measure valued solutions of Euler \cite{DM87}.
A  stronger and physically desirable relaxation is that of weak or distributional solutions. We say that \emph{weak inviscid limit} holds if $u^\nu$ converges (along a subsequence) weakly in $L_{t}^2L_x^2$ to a weak solution $u$ of the Euler equations as $\nu\to 0$. Without boundaries, \cite{Glimm,Isett17,DE17} show that any suitable fractional degree of regularity implies the weak inviscid limit. On domains with boundary, the situation is more subtle and less is understood.  The work of \cite{Filho05} establishes the weak inviscid limit for Navier-friction boundary conditions with constant slip-length and ``rough'' initial data.   Constantin \& Vicol \cite{CV17} later proposed sufficient conditions under which weak inviscid limit holds more generally for no-slip boundary condition.
 In 2D, they assume uniform local enstrophy bounds.
In 3D, they assume weak $L_x^2$ convergence of the velocity for each time-slice 
and a uniform bound on the local second-order structure function in the inertial range (down to an arbitrary scale $\eta(\nu)$  converging to $0$ as $\nu\to 0^+$).  We remark that, due to failure of weak-strong uniqueness on domains with boundaries \cite{W17}, convergence in the inviscid limit can hold to (possibly non-unique) weak solutions which may coexist with a strong Euler solution.  This is particularly relevant in 2D. Our main result is
\begin{thm}\label{theorem}
Let $\Omega\subset \mathbb{R}^d$, $d= 2,3$ be a bounded domain with $C^2$ boundary.
Let $\{u^{\nu_n}\}$ be a sequence of Leray-Hopf weak solutions on $\Omega\times [0, T]$ of \eqref{NSb}--\eqref{NSf} with either no-slip  \eqref{noslip} or Navier-friction boundary conditions \eqref{NSslip1}--\eqref{NSslip2}, viscosities $\nu_n\to 0$, initial data $u^{\nu_n}(0)$ uniformly bounded in $L^2(\Omega)$, and forces $f^{\nu_n}\wc f$ in $L^2(0, T; L^2(\Omega))$.
Assume that for every $U\Subset \Omega$, one can find a positive constant $C=C_U$ and a local structure-function scaling exponent $\zeta_2:=\zeta_2(U)\in (0,2)$ such that for all $n\ge 1$
\bq\label{StuctFunc}
S_2(r;U):= \int_0^T \int_U |u^{\nu_n} (x+r,t) - u^{\nu_n}(x,t)|^2 dx dt  \leq C |r|^{\zeta_2} \qquad   \forall \  \eta({\nu_n})\leq |r|< {\rm dist}(U,\partial\Omega),
\eq
where $\eta(\nu):= \nu^{1/(2-\zeta_2)}$. 
Then every weak limit $u$ of $u^{\nu_n}$ is $L^2(0, T; L^2(\Omega))$ is a weak solution of Euler with forcing $f$. Moreover, $u$ inherits the regularity $u\in L^2(0,T;B_2^{\zeta_2(U)/2,\infty}(U))$ for any $U\Subset \Omega$.
\end{thm}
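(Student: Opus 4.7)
My strategy is to pass to the limit $\nu_n \to 0$ in the weak formulations \eqref{weakNS}/\eqref{weakNSfric}. Since weak Euler solutions are tested against $\varphi \in C_c^\infty((0,T)\times \Omega)$ with $\mathop{\mathrm{div}}\varphi=0$, the boundary integral on the right of \eqref{weakNSfric} vanishes identically, so the no-slip and Navier-friction cases become identical once we restrict to such compactly supported test fields. The viscous contribution decays since
\[
\Bigl|\nu_n \int_0^T\!\!\int_\Omega \nabla u^{\nu_n}:\nabla\varphi\, dxdt\Bigr| \leq \nu_n^{1/2}\bigl(\nu_n^{1/2}\|\nabla u^{\nu_n}\|_{L^2_{t,x}}\bigr)\|\nabla\varphi\|_{L^2_{t,x}} \leq C\,\nu_n^{1/2}\to 0
\]
by the energy inequalities \eqref{enD}/\eqref{enN}, and the forcing term passes to the limit by the assumed weak convergence $f^{\nu_n}\wc f$. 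The entire substance of the argument therefore lies in the nonlinear term, and reduces to establishing strong $L^2$-compactness of $\{u^{\nu_n}\}$ on every compact subset of $(0,T)\times\Omega$.

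\textbf{Local strong compactness via mollification.} Fix $U\Subset\Omega$, let $\rho_\epsilon$ be a standard radial spatial mollifier at scale $\epsilon$, and write $u^{\nu_n}_\epsilon := u^{\nu_n}*\rho_\epsilon$ on a slightly shrunken set $U'$. Hypothesis \eqref{StuctFunc} yields the mollification estimate
\[
\|u^{\nu_n}-u^{\nu_n}_\epsilon\|_{L^2((0,T)\times U')}^2 \leq C\epsilon^{\zeta_2} \qquad\text{whenever}\quad \epsilon \geq \eta(\nu_n),
\]
by averaging against $\rho_\epsilon$. For \emph{fixed} $\epsilon>0$, the mollified family $\{u^{\nu_n}_\epsilon\}_n$ is uniformly bounded in $L^\infty_t H^k_x(U')$ for every $k$ (with constants blowing up in $\epsilon$ but not in $n$). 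Testing \eqref{weakNS}/\eqref{weakNSfric} against $\rho_\epsilon*\psi$ for divergence-free $\psi\in C_c^\infty(U')$---which stays solenoidal under convolution and thereby kills the pressure---yields a uniform-in-$n$ bound $\|\partial_t u^{\nu_n}_\epsilon\|_{L^2(0,T; H^{-N}_\sigma(U'))}\leq C(\epsilon)$. The Aubin-Lions-Simon lemma then produces, for each $\epsilon>0$ and along a subsequence, $u^{\nu_n}_\epsilon \to u_\epsilon$ strongly in $L^2_{t,x}(U')$. Combining this with the mollification estimate above, the analogous estimate for the weak $L^2$ limit $u$ (inherited by weak lower semicontinuity of $r\mapsto \|u(\cdot+r)-u\|_{L^2}$, since $\eta(\nu_n)\to 0$), and the triangle inequality $\|u^{\nu_n}-u\|_{L^2_{t,x}}\leq \|u^{\nu_n}-u^{\nu_n}_\epsilon\| + \|u^{\nu_n}_\epsilon - u_\epsilon\| + \|u_\epsilon - u\|$, a standard diagonal argument letting $\epsilon=\epsilon(n)\downarrow 0$ slowly enough that $\epsilon(n)\geq\eta(\nu_n)$ upgrades to strong convergence $u^{\nu_n}\to u$ in $L^2((0,T)\times U)$.

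\textbf{Conclusion, regularity, and main obstacle.} With local strong $L^2$-convergence in hand, $u^{\nu_n}\otimes u^{\nu_n}\to u\otimes u$ in $L^1_{t,x,\mathrm{loc}}$, and all remaining terms of \eqref{weakNS}/\eqref{weakNSfric} pass to the limit as explained above, producing the weak Euler identity. The Besov regularity is immediate from weak lower semicontinuity: for any $r$ with $\eta(\nu_n)<|r|$ eventually,
\[
\int_0^T\!\!\int_U |u(x+r)-u(x)|^2\,dxdt \leq \liminf_{n\to\infty}\int_0^T\!\!\int_U |u^{\nu_n}(x+r)-u^{\nu_n}(x)|^2\,dxdt \leq C|r|^{\zeta_2},
\]
which is the seminorm $u\in L^2(0,T;B_2^{\zeta_2/2,\infty}(U))$. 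The principal technical obstacle I anticipate is the time-equicontinuity step: on a bounded domain the Navier-Stokes pressure is a \emph{global} object without direct local control, so one cannot estimate $\partial_t u^{\nu_n}$ in a negative Sobolev norm on $U$ in a naive way. The remedy, which I have built in above, is to work only against \emph{solenoidal} local test fields---using $\rho_\epsilon*\psi$ rather than an arbitrary local test---thereby erasing the pressure entirely from the weak formulation. The second subtlety, that \eqref{StuctFunc} holds only down to scale $\eta(\nu_n)$, is precisely what forces the two-scale mollification argument rather than a direct appeal to uniform Besov compactness.
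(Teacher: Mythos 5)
Your central compactness step contains a genuine gap: the Aubin--Lions--Simon lemma does not apply to the triple $H^k(U')\subset L^2(U')\subset (H^{N}_{0,\sigma}(U'))^{*}$, because $L^2(U')$ does not embed injectively into the dual of compactly supported solenoidal fields. Any gradient pairs to zero against every $\psi\in C_c^\infty(U')$ with $\div\,\psi=0$, and among these there are nontrivial \emph{divergence-free} fields, namely $\nabla q$ with $q$ harmonic on $U'$. Your two bounds therefore place no constraint on time oscillations along such modes: the sequence $v_n(x,t)=\sin(nt)\,\nabla q(x)$, with $q$ harmonic and nonconstant, is divergence-free, bounded in $L^\infty_t H^k(U')$, satisfies $\partial_t v_n\equiv 0$ as an element of $L^2(0,T;(H^{N}_{0,\sigma}(U'))^{*})$, and yet has no subsequence converging strongly in $L^2((0,T)\times U')$. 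So the conclusion ``$u^{\nu_n}_\epsilon\to u_\epsilon$ strongly in $L^2_{t,x}(U')$'' does not follow, and the local strong $L^2$-compactness of $u^{\nu_n}$ that your whole plan rests on is not established. The obstruction is exactly the harmonic-gradient (locally unseen pressure) degree of freedom, which purely local solenoidal testing \emph{erases} rather than controls; removing it requires global information. This is precisely how the paper proceeds: it takes curl of the equation to eliminate the pressure globally and gets $\omega_n\to\omega$ strongly in $C(0,T;H^{-1-\delta}_{loc}(\Omega))$ by Aubin--Lions applied to the vorticity (Lemma \ref{lemma2}, where no injectivity issue arises since arbitrary test fields are allowed); it then uses the global fact that a field of $H(\Omega)$ with vanishing curl is zero (Corollary 2.9 of \cite{GR12}) to upgrade to time-slice weak convergence $u_n(t)\wc u(t)$ in $L^2(\Omega)$ for a.e.~$t$ (Lemma \ref{lemma3}); and it passes to the limit in the nonlinearity via Constantin--Vicol's Theorem 3.1 (Lemma \ref{lemma4}), which needs only time-slice weak convergence plus \eqref{StuctFunc}. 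Note that the paper never claims local strong $L^2_{t,x}$ convergence of the velocity; indeed its closing Remark explains that under only an $L^2_t$ Besov-type bound it is unclear how to obtain the compactness (in the sense of \cite{LNT}) that would yield it, which is further evidence that your stronger conclusion cannot be reached by soft arguments.

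A secondary, fixable gap: your mollification estimate $\|u^{\nu_n}-u^{\nu_n}_\epsilon\|^2_{L^2((0,T)\times U')}\le C\epsilon^{\zeta_2}$ for $\epsilon\ge \eta(\nu_n)$ is not justified as written, since a standard mollifier at scale $\epsilon$ averages $S_2(r;U)$ over \emph{all} $|r|\le\epsilon$, including the dissipation range $|r|<\eta(\nu_n)$ where \eqref{StuctFunc} is not assumed. The repair is exactly the paper's Lemma \ref{lemma1}: the energy inequality gives $\nu_n\int_0^T\|\nabla u^{\nu_n}\|^2_{L^2}\,dt\le C$, hence $S_2(r;U)\le C|r|^2/\nu_n\le C|r|^{\zeta_2}$ for $|r|\le \eta(\nu_n)=\nu_n^{1/(2-\zeta_2)}$, extending the bound to all scales; for Navier-friction conditions this additionally requires the Korn-type computation converting the $\|D(u^\nu)\|_{L^2}$ control of \eqref{enN} into full-gradient control up to a boundary-curvature term. (Alternatively one may take a mollifier supported in the annulus $\{\epsilon/2\le |r|\le\epsilon\}$.) This repair, however, does not rescue the compactness step above, which requires a genuinely global argument of the type the paper supplies.
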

Theorem \ref{theorem} removes assumption of weak convergence for each time-slice in the 3D result of \cite{CV17} and weakens their assumption of uniform local $L^2$ bounds for vorticity in the 2D case. In fact, we have

\begin{prop}\label{prop}
In any $C^2$ bounded domains $\Omega\subset \mathbb{R}^d$, $d=2, 3$ and with either no-slip or Navier-friction boundary conditions on the velocity $u^\nu$, the bound  \eqref{StuctFunc} is equivalent to following: for every $U\Subset \Omega$, there exists $\delta:=\delta(U)>0$ and a constant $C:=C_U$ such that for all $n\ge 1$
\be\label{vortbd}
\|\om^{\nu_n}\|_{L^2(0,T;H^{-1+\delta}(U))} \leq C, \qquad \om^{\nu_n}:= {\rm curl}(u^{\nu_n}).
\ee
\end{prop}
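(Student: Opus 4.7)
The plan is to prove the two implications separately. Throughout, I will use the uniform bounds $\|u^{\nu_n}\|_{L^\infty_t L^2_x(\Omega)}\le C$ and $\nu_n\|\nabla u^{\nu_n}\|_{L^2_t L^2_x(\Omega)}^2 \le C$, which follow from the energy inequalities \eqref{enD}, \eqref{enN} together with the hypothesis that $u^{\nu_n}(0)$ is bounded in $L^2(\Omega)$. These are the only boundary-dependent ingredients; the rest of the argument is insensitive to whether \eqref{noslip} or \eqref{NSslip1}--\eqref{NSslip2} is imposed.

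\textbf{Step 1: \eqref{StuctFunc} implies \eqref{vortbd}.} The first move is to extend the structure-function estimate from the hypothesised range down to all admissible scales. For $|r|\ge \eta(\nu_n)$ the bound is \eqref{StuctFunc} itself, while for $0<|r|<\eta(\nu_n)$ the elementary inequality $S_2(r;U)\le |r|^2 \|\nabla u^{\nu_n}\|_{L^2_t L^2_x}^2 \le C\nu_n^{-1}|r|^2$ can be rewritten, using the calibration $\eta(\nu_n)^{2-\zeta_2}=\nu_n$, as $S_2(r;U) \le C|r|^{\zeta_2}$. Thus $u^{\nu_n}$ is uniformly bounded in $L^2(0,T;B_2^{\zeta_2/2,\infty}(U'))$ for every $U'\Subset U$, via the first-difference characterisation of $B^{s,\infty}_2$, valid for $s=\zeta_2/2\in(0,1)$. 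Taking curl loses one derivative on the Besov scale, and the embedding $B^{s-1,\infty}_2\hookrightarrow H^{s-1-\varepsilon}$ (any $\varepsilon>0$) produces \eqref{vortbd} with $\delta:=\zeta_2/2-\varepsilon$.

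\textbf{Step 2: \eqref{vortbd} implies \eqref{StuctFunc}.} Fix $U\Subset V\Subset \Omega$, pick a cutoff $\chi\in C_c^\infty(V)$ with $\chi\equiv 1$ on $U$, and set $w^n:=\chi u^{\nu_n}$, extended by zero to $\mathbb{R}^d$. Since $\div u^{\nu_n}=0$, the product identities for $\curl$ and $\div$ give $\curl w^n = \chi\,\omega^{\nu_n} + (\nabla\chi)\wedge u^{\nu_n}$ and $\div w^n = (\nabla\chi)\cdot u^{\nu_n}$. The first term $\chi\omega^{\nu_n}$ is controlled in $L^2_t H^{-1+\delta}_x(\mathbb{R}^d)$ by \eqref{vortbd} and a smooth-multiplier estimate; the commutator terms lie in $L^2_t L^2_x\hookrightarrow L^2_t H^{-1+\delta}_x$ by the energy bound. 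The $\mathbb{R}^d$ Helmholtz Fourier identity $|\xi|^2|\widehat w|^2=|\xi\cdot\widehat w|^2+|\xi\wedge\widehat w|^2$ then yields
\[
\|w^n\|_{L^2_t H^\delta_x(\mathbb{R}^d)}^2 \lesssim \|\curl w^n\|_{L^2_t H^{-1+\delta}_x}^2 + \|\div w^n\|_{L^2_t H^{-1+\delta}_x}^2 + \|w^n\|_{L^2_t L^2_x}^2 \le C,
\]
so $u^{\nu_n}$ is uniformly bounded in $L^2(0,T;H^\delta(U))$. Replacing $\delta$ by $\min(\delta,\tfrac{1}{2})$ if necessary to ensure $\delta\in(0,1)$, the embedding $H^\delta\hookrightarrow B_2^{\delta,\infty}$ and the first-difference characterisation give \eqref{StuctFunc} with $\zeta_2:=2\delta\in(0,2)$.

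The main obstacle I anticipate is the smooth-multiplier step in Step 2: the operation $\omega^{\nu_n}\mapsto \chi\omega^{\nu_n}$ must be verified to map $H^{-1+\delta}(V)$ continuously into $H^{-1+\delta}(\mathbb{R}^d)$. Because $\delta\in(0,1)$ the index $-1+\delta\in(-1,0)$ is mild, and this is a standard duality argument against $H^{1-\delta}_0(V)$ test functions; nevertheless it is the one place where the local interpretation of negative-index Sobolev spaces on a domain needs genuine care. Everything else---the calibrated extension of \eqref{StuctFunc} to small scales in Step 1, the Fourier div-curl identity on $\mathbb{R}^d$, and the final Besov--Sobolev embeddings---is routine.
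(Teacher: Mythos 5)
Your proposal is correct and takes essentially the same route as the paper: your Step 1 is exactly the argument of Lemma \ref{lemma1} (calibrated extension of \eqref{StuctFunc} below the dissipation scale $\eta(\nu_n)$ via the uniform viscous dissipation bound) followed by the Besov--Sobolev embedding of Lemma \ref{embed} and one derivative lost under $\curl$, while your Step 2 is the paper's cutoff-plus-elliptic-estimate argument, with your Fourier div--curl identity for $\chi u_n$ being precisely the Fourier-side form of the estimate the paper extracts from $-\Delta(\chi u_n)=\curl(\chi\omega_n)+(\text{commutator terms})$. The multiplier step you rightly flag is exactly where the paper argues by duality against $H^{1-\delta}_0(U)$ and assumes, without loss, $\delta\in(0,1/2)$ to avoid the $H^{1/2}$ endpoint of the bilinear pairing, so your $\min(\delta,\tfrac12)$ should be replaced by any $\delta'<\min(\delta,\tfrac12)$; note also that for Navier-friction conditions the dissipation bound $\nu_n\|\nabla u^{\nu_n}\|_{L^2_tL^2_x}^2\le C$ you invoke does not read off directly from \eqref{enN} but requires the paper's integration-by-parts conversion of $\|D(u^\nu)\|_{L^2}$ into $\|\nabla u^\nu\|_{L^2}$ with the curvature boundary term absorbed by the friction term.
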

Coupling the proposition with Theorem \ref{theorem}, we see that uniform $L^2_tH_{loc}^{-1+}$ regularity on vorticity in either 2D or 3D is sufficient to conclude that the weak inviscid limit holds.

We observe in Lemma \ref{lemma1} below that the uniform assumption \eqref{StuctFunc} in the inertial range, together with uniform boundedness of viscous dissipation which follows from Leray's construction \eqref{enD} or \eqref{enN}, implies uniform Besov regularity.  The proof of this fact makes use of the notion of a dissipation scale, below which the Leray solution possesses a certain degree of regularity uniformly in viscosity.   Despite their equivalence, the practical advantage of stating the condition \eqref{StuctFunc} instead of uniform regularity is that the bound need be checked only within a finite  -- though ever increasing with $Re$ --  range of scales.   
Finally, we remark that the presence of solid boundaries adds severe obstructions to obtaining the strong inviscid limit.  On the other hand, our result shows that, for weak inviscid limit, we do not need to require any further conditions relative to the case without boundary in spite of the nonlocal nature of the equations.


 \section{Proof of Theorem \ref{theorem}}
 
For a given vanishing viscosity subsequence $\{\nu_n\}_{n\in \mathbb{N}}$, denote $u_n:=u^{\nu_n}$, $\omega_n:=\omega^{\nu_n}$ and $f_n=f^{\nu_n}$, \red{where $u_n$ are Leray-Hopf weak solutions of \eqref{NSb}-\eqref{NSf}}. Upon relabelling we have
\begin{align}
\label{wconv:u}
u_n\wc u \quad&\text{in}\ ~L^2(0, T; H(\Omega)),\\
\label{wconv:om}
\om_n\wc \om\quad&\text{in}\ ~L^2(0, T; H^{-1}(\Omega)),
\end{align}
and 
\bq
f_n\wc f\quad\text{in}~L^2(0, T; L^2(\Omega)).
\eq
It follows that $\omega=\curl (u)$ in the sense of distributions. We need to show that $u$ is a weak Euler solution.

\begin{lemma}\label{lemma1}
In any dimension, local structure function scaling in the inertial range \eqref{StuctFunc}  implies that Leray solutions $u_n$ are uniformly bounded in $L^2(0,T;B_2^{\zeta_2(U)/2,\infty}(U))$ for every $U\Subset \Omega$.  Recall that the Besov space $B_p^{\sigma,\infty}(U)$ is made up of measurable functions $f:U\subset \Rr^d\to \mathbb{R}$ which are finite in the norm
\be
\| f\|_{B_p^{\sigma,\infty}(U)}:= \| f\|_{L^p(U)} 
+ \sup_{r\in \Rr^d}\frac{\|f(\cdot+r) - f(\cdot)\|_{L^p(U\cap (U-\{r\}))}}{|r|^\sigma}
\label{besov-def} 
\ee
for $p\geq 1$ and $\sigma\in (0,1)$.
\end{lemma}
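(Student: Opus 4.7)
\emph{Proof proposal.} The plan is to establish the Besov bound by a case analysis on the translation parameter $r\in\RR^d$, splitting its range into the dissipative scales $|r|<\eta(\nu_n)$, the inertial scales $\eta(\nu_n)\le |r|<\dist(U,\partial\Omega)$, and the far scales $|r|\ge \dist(U,\partial\Omega)$. The inertial range is covered verbatim by the hypothesis \eqref{StuctFunc}. The far range is handled by the trivial translation bound
\begin{equation*}
\int_0^T\|u_n(\cdot+r,t)-u_n(\cdot,t)\|_{L^2(U\cap(U-\{r\}))}^2\,dt \le 4T\,\|u_n\|_{L^\infty_t L^2_x}^2,
\end{equation*}
which is $O(1)$ uniformly in $n$ by the global energy inequality \eqref{enD} or \eqref{enN} (with the forcing term absorbed via Gr\"onwall); since $|r|$ is bounded below by $\dist(U,\partial\Omega)$, this rewrites as $\le C|r|^{\zeta_2}$.

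The heart of the argument lies in the dissipative regime, where one must convert the blow-up of $\|\nabla u_n\|_{L^2}^2$ as $\nu_n\to 0$ into a translation bound that respects the $|r|^{\zeta_2}$ scaling. First I would extract from \eqref{enD}--\eqref{enN} the uniform viscous-dissipation estimate $\nu_n \int_0^T \|\nabla u_n\|_{L^2(\Omega)}^2\,dt \le C$. Next, fixing an intermediate open set $V$ with $U\Subset V\Subset \Omega$, for $|r|<\dist(U,\partial V)$ the elementary finite-difference inequality gives
\begin{equation*}
\int_0^T \int_U |u_n(x+r,t) - u_n(x,t)|^2\,dxdt \,\le\, |r|^2 \int_0^T \|\nabla u_n\|_{L^2(V)}^2\,dt \,\le\, \frac{C|r|^2}{\nu_n}.
\end{equation*}
The key observation is that the dissipation scale $\eta(\nu_n)=\nu_n^{1/(2-\zeta_2)}$ is tuned so that the viscous bound $|r|^2/\nu_n$ coincides with the inertial bound $|r|^{\zeta_2}$ exactly at $|r|=\eta(\nu_n)$: for $|r|\le \eta(\nu_n)$ one has $\nu_n\ge |r|^{2-\zeta_2}$, hence $|r|^2/\nu_n\le |r|^{\zeta_2}$, which closes the small scales.

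Combining the three regimes with the uniform $L^\infty_t L^2_x$ bound on $u_n$ and the definition \eqref{besov-def} of the Besov norm yields the desired uniform control of $u_n$ in $L^2(0,T;B_2^{\zeta_2/2,\infty}(U))$. The main technical subtlety I anticipate is ensuring that the intermediate set $V$ accommodates all $|r|$ up to $\eta(\nu_n)$; since $\eta(\nu_n)\to 0$, this is automatic once $n$ is large, and the finitely many remaining indices can be absorbed into the final constant by exploiting the (non-uniform but finite) $L^2_tH^1_x$ regularity of each individual Leray solution. It is worth noting that no delicate boundary information on $u_n$ enters the argument beyond the global energy inequality, which is in line with the local nature of the hypothesis \eqref{StuctFunc}.
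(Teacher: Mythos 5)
Your proposal is correct and follows essentially the same route as the paper's proof: the uniform viscous-dissipation bound extracted from the energy inequality, the finite-difference estimate $S_2(r;U)\le |r|^2\,\nu_n^{-1}C$ at dissipative scales with the matching condition $|r|^2/\nu_n\le |r|^{\zeta_2}$ for $|r|\le\eta(\nu_n)=\nu_n^{1/(2-\zeta_2)}$, the hypothesis \eqref{StuctFunc} in the inertial range, and the $L^\infty_t L^2_x$ energy bound to control the Besov seminorm \eqref{besov-def} for $|r|\ge \mathrm{dist}(U,\partial\Omega)$ (your intermediate set $V$ and the attendant worry about small $n$ are unnecessary: for $|r|<\mathrm{dist}(U,\partial\Omega)$ the segment $[x,x+r]$ stays in $\Omega$, so one bounds directly by $\|\nabla u_n\|_{L^2(\Omega)}$, as the paper does). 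The one step you compress is the Navier-friction case, where \eqref{enN} controls $\nu\|D(u^\nu)\|_{L^2(\Omega)}^2$ rather than $\nu\|\nabla u^\nu\|_{L^2(\Omega)}^2$, and the paper devotes half its proof to the conversion — an integration by parts using $u^\nu\cdot\hat n=0$ whose boundary term $\nu\int_0^T\int_{\partial\Omega}(u^\nu\cdot\hat\tau_i)(\hat\tau_i\cdot\nabla\hat n\cdot\hat\tau_j)(u^\nu\cdot\hat\tau_j)\,dS\,dt$ is absorbed via the friction dissipation term and the uniform-in-$\nu$ lower bound on $\alpha^\nu$ — but since this is exactly the estimate you assert, your argument coincides with the paper's rather than diverging from it.
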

\begin{proof}
By energy balance for no-slip boundary conditions \eqref{enD}, the viscous dissipation is bounded by
\be\label{disscont}
\nu \int_0^T \|\nabla u^\nu(t)\|_{L^2(\Omega)}^2 dt \leq \frac{1}{2} \|u_0^\nu \|_{L^2(\Omega)}^2<{\rm (const.)}.
\ee
For Navier-friction conditions, a simple calculation gives
\begin{align}\nonumber
\nu \int_{0}^T \int_\Omega |\nabla u^\nu(t)|^2 dxdt &=2 \nu \int_{0}^T \int_\Omega |D (u^\nu(t))|^2 dx dt  -  \nu \int_{0}^T \int_{\partial\Omega}  \hat{n} \cdot (u^\nu \cdot \nabla) u^\nu dSdt\\
 &=2 \nu \int_{0}^T \int_\Omega |D (u^\nu(t))|^2 dx  dt +  \nu \sum_{i,j=1}^{d-1}   \int_{0}^T \int_{\partial \Omega} ( u^\nu \cdot \hat{\tau}_i) (\hat{\tau}_i \cdot \nabla \hat{n} \cdot \hat{\tau}_j) ( u^\nu \cdot \hat{\tau}_j) dS dt
\end{align}
where, for every $x\in \p\Omega$, $\{ \hat{\tau}_i(x)\}_{i=1}^{d-1}$ is an orthogonal basis of the tangent space of $\p\Omega$ at $x$. In passing from the first to second line, we used the fact that $u^\nu\cdot \nabla$ is a tangential-to-the-boundary derivative operator and the impermeability condition $\hat{n}\cdot u^\nu=0$.  Combined with the energy balance \eqref{enN}, we find
 \begin{align}\nonumber
\nu \int_{0}^T\int_\Omega |\nabla u^\nu(t)|^2 dxdt &\leq \frac{1}{2} \int_\Omega |u_0^\nu|^2 dx - \frac{1}{2} \int_\Omega |u^\nu(T)|^2 dx\\
& \!\!\! \!\!\! \!\!\! \!\!\! \!\!\! - \nu   \int_{0}^T  \int_{\partial \Omega}\alpha^\nu|u^\nu|^2  dS  dt+  \nu \sum_{i,j=1}^{d-1}  \int_{0}^T    \int_{\partial \Omega} ( u^\nu \cdot \hat{\tau}_i) (\hat{\tau}_i \cdot \nabla \hat{n} \cdot \hat{\tau}_j) ( u^\nu \cdot \hat{\tau}_j) dSdt. \label{lastterm}  
\end{align}
Since $\sqrt{\nu \alpha^\nu} u^\nu \in L^2(0,T;L^2(\partial\Omega))$ from the energy balance  \eqref{enN}, $\alpha^\nu$ uniformly-in-$\nu$ bounded from below, and $\hat{\tau}_i \cdot \nabla \hat{n} \cdot \hat{\tau}_j\in L^\infty(\partial\Omega)$ for all $i,j= 1, \dots, (d-1)$, the last term on the right-hand-side of \eqref{lastterm} is controlled by the initial kinetic energy.  Thus, we again obtain a bound on the dissipation of the form \eqref{disscont} \red{with the dependence constant $1/2$ replaced by some number depending only on $\|\hat\tau\cdot \nabla \hat n\cdot \hat\tau\|_{L^\infty(\p\Omega)}$}.

Then, we have the following bound on the second-order structure function
\be
S_2(r;U) \leq  |r|^2  \int_0^T \|\nabla u^{\nu_n}(t)\|_{L^2(\Omega)}^2 dt\leq C \left( \frac{|r|}{\sqrt{{\nu_n}}}\right)^2 \qquad \forall \   |r|< {\rm dist}(U,\partial\Omega),
\ee
for some constant $C$ independent of viscosity.  For all $r$ such that $|r|\leq \eta_n:= {\nu_n}^{1/(2-\zeta_2)}$, we have $(|r|/\sqrt{\nu_n})^2   \leq |r|^{\zeta_2}$. Combined with the  assumption \eqref{StuctFunc} we have over the whole range 
\be
S_2(r;U) \leq C  |r|^{\zeta_2}  \qquad  \forall \  |r|<{\rm dist}(U,\partial\Omega).
\ee
Since $u^\nu$ is uniformly bounded in $L^\infty(0,T;L^2(\Omega))$, this establishes Besov regularity as claimed.
\end{proof}

\begin{lemma}\label{lemma2}
\red{If $u_n$ are uniformly bounded in $L^2(0,T;B_2^{\zeta_2/2,\infty}(U))$ for some $\zeta_2=\zeta_2(U)\in (0,2)$ and every $U\Subset \Omega$, then} we have $\omega_n\to \omega$ strongly in $C(0,T;H_{loc}^{-1-\delta}(\Omega))$  for some $\delta>0$. 
\end{lemma}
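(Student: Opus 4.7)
The plan is to apply an Aubin--Lions--Simon compactness theorem of Arzel\`a--Ascoli type for $C([0,T];X)$: if a family is uniformly bounded in $L^\infty(0,T;X_0)$ with $X_0\Subset X$ compactly embedded, and if its distributional time derivative is uniformly bounded in $L^p(0,T;X_1)$ with $X\hookrightarrow X_1$ for some $p>1$, then the family is relatively compact in $C([0,T];X)$. I will work interior to $\Omega$ with the triple $X_0=H^{-1}(V)\Subset X=H^{-1-\delta}(V)\hookrightarrow X_1=H^{-M}(V)$ for $V\Subset\Omega$ and $M$ large, the compactness $X_0\Subset X$ coming from Rellich on the bounded Lipschitz set $V$.

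First I would record the uniform spatial bound: the Leray bound gives $u_n\in L^\infty(0,T;L^2(\Omega))$, hence $\omega_n=\curl u_n$ is uniformly bounded in $L^\infty(0,T;H^{-1}(\Omega))$, and in particular each time slice $\omega_n(t)$ lies in a precompact subset of $H^{-1-\delta}(V)$.

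Next I would verify the time-regularity bound. Interior to $\Omega$, taking the curl of \eqref{NSb} kills the pressure to give
\[
\partial_t\omega_n=-\curl\div(u_n\otimes u_n)+\nu_n\Delta\omega_n+\curl f_n
\]
as distributions in $(0,T)\times\Omega$. The convective term is controlled via $u_n\otimes u_n\in L^\infty(0,T;L^1_{loc}(\Omega))$, which embeds into $L^\infty(0,T;H^{-M}_{loc}(\Omega))$ once $M>d/2+2$; the viscous term sits in $\nu_n\cdot L^\infty(0,T;H^{-3}_{loc})$ and hence vanishes; the forcing is in $L^2(0,T;H^{-1}(\Omega))$ by the assumed $f_n\wc f$ in $L^2L^2$. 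Summing, $\{\partial_t\omega_n\}$ is uniformly bounded in $L^2(0,T;H^{-M}(V))$ for $M$ large, and Aubin--Lions--Simon yields relative compactness of $\{\omega_n\}$ in $C([0,T];H^{-1-\delta}(V))$ for every $V\Subset\Omega$ and every $\delta\in(0,M-1)$. Uniqueness of the weak limit \eqref{wconv:om} then forces the whole sequence $\omega_n\to\omega$ in $C(0,T;H^{-1-\delta}_{loc}(\Omega))$.

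The main obstacle I anticipate is the careful handling of the ``$_{loc}$'' cutoffs and verifying that the interior vorticity identity holds in a distributional sense strong enough to accommodate negative-order test functions. One may alternatively bypass Aubin--Lions--Simon: integrating the equation gives the H\"older-$\tfrac12$-in-time estimate $\|\omega_n(t)-\omega_n(s)\|_{H^{-M}_{loc}}\leq|t-s|^{1/2}\|\partial_t\omega_n\|_{L^2_tH^{-M}_{loc}}$, and interpolation with the $L^\infty_tH^{-1}_{loc}$ bound yields $\|\omega_n(t)-\omega_n(s)\|_{H^{-1-\delta}_{loc}}\lesssim|t-s|^{\delta/(2(M-1))}$, whence equicontinuity plus Rellich furnishes Arzel\`a--Ascoli directly. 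I also note that the Besov regularity hypothesis of the lemma is not strictly necessary for this strong-compactness argument; the Leray bound on $u_n$ alone suffices, though the Besov bound sharpens the available spatial regularity elsewhere.
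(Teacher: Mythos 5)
Your proof is correct, and it follows the same overall skeleton as the paper's: curl the momentum equation to eliminate the pressure, bound $\partial_t\omega_n$ in a very negative space, apply Aubin--Lions--Simon with the triple $H^{-1}\Subset H^{-1-\delta}\hookrightarrow(\text{large negative space})$, and upgrade subsequential convergence to full-sequence convergence via the weak limit \eqref{wconv:om}. The genuine difference is in how the nonlinear term is handled. The paper uses the hypothesis of the lemma: the uniform Besov bound plus the embedding $B_2^{\zeta_2/2,\infty}(U)\subset L^p(V)$, $p\in(2,2d/(d-\zeta_2))$ (Lemma \ref{embed}), gives $u_n\otimes u_n\in L^2(0,T;L^q(V))$ with $q>1$, so that $\partial_t\omega_n$ is bounded in $L^2(0,T;W^{-2,q}(V))$ --- a space only two derivatives below $L^q$. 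You instead use only the Leray bound $u_n\in L^\infty_t L^2$, putting $u_n\otimes u_n\in L^\infty_t L^1_{loc}\subset L^\infty_t H^{-s}_{loc}$ for $s>d/2$ (by $H^s_0(V)\subset L^\infty$ and duality), which lands $\partial_t\omega_n$ in $L^2(0,T;H^{-M}(V))$ for $M>d/2+2$. Since Simon's theorem is indifferent to how negative the third space is, your version proves a strictly stronger statement: the conclusion of Lemma \ref{lemma2} holds for arbitrary Leray--Hopf sequences, with no Besov or structure-function input at all. This sharpens even the paper's own remark following the lemma, which states that local $L^p$ bounds with $p>2$ suffice; you show $p=2$ does. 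What the paper's route buys is a time-derivative bound with only two derivatives of loss in a $q>1$ space, and coherence with the rest of the paper where the Besov bound is available anyway; what your route buys is elementarity and generality (of course, the Besov hypothesis remains essential elsewhere in the proof of Theorem \ref{theorem}, namely in Lemma \ref{lemma4}). Your alternative bypass of Simon's theorem --- the $C^{1/2}([0,T];H^{-M})$ estimate from $\partial_t\omega_n\in L^2_tH^{-M}$, interpolated against $L^\infty_tH^{-1}$ to get equicontinuity in $H^{-1-\delta}$ with exponent $\delta/(2(M-1))$, then Arzel\`a--Ascoli with Rellich supplying pointwise precompactness --- is also sound, modulo the cutoff technicalities you already flag (interpolation of negative-order spaces on domains is cleanest after multiplying by $\chi\in C_c^\infty$ and working on $\Rr^d$).
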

\begin{proof}
Choosing test vector fields of the form $\curl \varphi$, $\varphi\in C_c^\infty(\Omega\times (0, T))$ in the weak formulations \eqref{weakNS} and \eqref{weakNSfric} we find that the vorticity $\om_n$ satisfies in the sense of spacetime distribution 
\be\label{vorteq}
\partial_t \omega_n + {\rm curl}( \div  (u_n\otimes u_n)) = \nu_n \Delta \omega_n+ g_n
\ee
where $g_n:={\rm curl}f_n$ uniformly bounded in $L^2(0, T; H^{-1}(\Omega))$.   Fix two open sets $V\Subset U\Subset \Omega$.  By Lemmas \ref{lemma1} and \ref{embed}, we have that  $u_n$ is uniformly bounded in  
$L^2(0,T;B_2^{\zeta_2(U)/2,\infty}(U))\subseteq L^2(0,T;L^p(V))$ with $ p\in (2,2d/(d-\zeta_2))$.  This combined with the uniform bound for the velocity in $L^\infty(0,T; L^2(V))$, gives that $u_n\otimes u_n\in L^2(0,T;L^{q}(V))$ for some $q\in (1,2]$. 
Moreover, it follows from the uniform boundedness of the viscous dissipation that the sequence $\sqrt{\nu_n} \  \omega_n$ is bounded in $L^2(0,T;L^2(\Omega))$.  Thus  \eqref{vorteq} implies 
\be
 \partial_t \omega_n \quad \text{is uniformly bounded in}\quad L^2(0,T;W^{-2,q}(V)).
\ee
Now for $\delta>0$  sufficiently small we have the embeddings $H^{-1}(U)\subset H^{-1-\delta }(V)\subset W^{-2, q}(V)$ where the first one is compact and the second one is continuous. Aubin-Lions's lemma then ensures that $\om_{n_k}\to w$ in $C(0, T; H^{-1-\delta}(V))$ for some subsequence $n_k\to \infty$ and some $w\in L^2(0, T; H^{-1}(U))$. A priori, both $n_k$ and $w$ depend on $U$. However, owing to the convergence \eqref{wconv:om}, we conclude that \red{$w=\omega$ and thus}
\bq\label{strong:om}
\om_n\to \omega\quad\text{in}~C(0, T; H^{-1-\delta}(V))
\eq
as a whole sequence.
\end{proof}
\begin{rem}
The proof does not make use of Lie-transport structure of the vorticity equation which holds for  strong solutions and distinguishes 2D and 3D. We simply take ${\rm curl}$ of the Navier-Stokes equation to eliminate the pressure but keep the nonlinearity as it is. Uniform local $L^p$ bounds on the velocity $u^\nu$ with $p>2$ are enough to conclude strong convergence \eqref{strong:om}.
\end{rem}
Next, from the convergence of vorticity for each time we deduce convergence of velocity:
\begin{lemma}\label{lemma3} 
\red{If  $\omega_n\to \omega$ strongly in $C(0,T;H_{loc}^{-1-\delta}(\Omega))$ for some $\delta>0$, then} the velocity converges weakly for almost every time, i.e.
\bq\label{u:timeslice}
u_n(t)\wc u(t)\quad\text{in}~L^2(\Omega)\quad\text{a.e.}~t\in [0, T].
\eq
\end{lemma}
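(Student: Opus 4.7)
My plan is to reduce pointwise-in-time weak convergence to a uniform-in-$n$ equicontinuity estimate for the scalar maps $t\mapsto(u_n(t),\phi)_{L^2(\Omega)}$ over a dense class of test fields $\phi$, combine this with the global $L^2(0,T;L^2)$ weak convergence \eqref{wconv:u} via Arzel\`a--Ascoli and a subsubsequence argument, and finally lift the conclusion from $H(\Omega)$ to $L^2(\Omega)$ by Helmholtz decomposition.

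Concretely, fix $\phi\in C_c^\infty(\Omega;\mathbb{R}^d)$ with $\div\phi=0$. Interior support of $\phi$ kills the boundary integral in the Navier-friction weak formulation \eqref{weakNSfric}, so the argument will treat both boundary conditions simultaneously. Testing \eqref{weakNS}/\eqref{weakNSfric} against $\rho(t)\phi(x)$ for $\rho\in C_c^\infty(0,T)$, and integrating by parts once more in the viscous term so that all spatial derivatives land on $\phi$, I obtain
\begin{equation*}
\tfrac{d}{dt}(u_n(t),\phi)\ =\ \int_\Omega u_n\otimes u_n:\nabla\phi\,dx\ +\ \nu_n\int_\Omega u_n\cdot\Delta\phi\,dx\ +\ \int_\Omega f_n\cdot\phi\,dx.
\end{equation*}
Using $\|u_n\|_{L^\infty_tL^2_x}\le C$ (from the energy inequalities \eqref{enD}/\eqref{enN}) and the uniform $L^2$-bound on $f_n$, each term is controlled and the sum is uniformly bounded in $L^2(0,T)$; Cauchy--Schwarz then yields the uniform H\"older estimate $|(u_n(t),\phi)-(u_n(s),\phi)|\le C_\phi|t-s|^{1/2}$.

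With the uniform pointwise bound $\sup_{n,t}\|u_n(t)\|_{L^2}\le C$ in hand, Arzel\`a--Ascoli extracts a subsequence along which $(u_{n_k}(\cdot),\phi)\to h$ uniformly on $[0,T]$. The weak convergence \eqref{wconv:u} tested against $\rho(t)\phi(x)$ forces $(u_n(\cdot),\phi)\wc(u(\cdot),\phi)$ in $L^2(0,T)$, so $h=(u(\cdot),\phi)$ a.e.; because $h$ is continuous it is the continuous representative, and the standard subsubsequence principle promotes the convergence to the full sequence. A diagonal argument over a countable dense subset $\{\phi_j\}$ of $\{\phi\in C_c^\infty(\Omega):\div\phi=0\}$ (itself dense in $H(\Omega)$) then produces a full-measure set $\mathcal{T}\subset[0,T]$ on which $(u_n(t),\phi_j)\to(u(t),\phi_j)$ for every $j$; combined with the uniform $L^2$-bound, this gives $u_n(t)\wc u(t)$ weakly in $H(\Omega)$ for every $t\in\mathcal{T}$.

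To upgrade weak convergence in $H(\Omega)$ to weak convergence in $L^2(\Omega)$, I Helmholtz-decompose any $\phi\in L^2(\Omega)$ as $\phi=P\phi+\nabla q$ with $P\phi\in H(\Omega)$; since $u_n(t),u(t)\in H(\Omega)$, integration by parts gives $(u_n(t),\nabla q)=0=(u(t),\nabla q)$, so $(u_n(t),\phi)=(u_n(t),P\phi)\to(u(t),P\phi)=(u(t),\phi)$ for $t\in\mathcal{T}$. The key technical point is the uniform equicontinuity step: the viscous contribution $\nu_n\int\nabla u_n:\nabla\phi$ must be integrated by parts onto $\phi$ before any estimate, since the $L^2$-norm of $\nabla u_n$ blows up as $\nu_n\to0$; the compact support of $\phi$ in $\Omega$ is precisely what legitimises this move in both no-slip and Navier-friction cases. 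I remark that the strong vorticity convergence of Lemma \ref{lemma2} does not enter directly in this route; one could alternatively exploit the identity $(u_n,\curl A)=(\omega_n,A)$ for compactly supported potentials $A$ and pass to the limit immediately through the $C(0,T;H^{-1-\delta}_{loc})$ convergence of $\omega_n$, but constructing such a potential $A$ from a given interior-supported solenoidal $\phi$ is topologically delicate on multiply connected $\Omega$ (and carries uncontrolled boundary terms in the Navier-friction case), so the equicontinuity route seems the cleaner and more portable choice.
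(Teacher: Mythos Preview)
Your argument is correct, but it follows a genuinely different route from the paper's.  The paper's proof uses the stated hypothesis directly: for each fixed $t_0$ it extracts, by the energy bound, a subsequential weak limit $v(t_0)$ of $u_{n_k}(t_0)$ in $H(\Omega)$; passes to curls so that $\curl v(t_0)=\lim_k\omega_{n_k}(t_0)=\omega(t_0)=\curl u(t_0)$ (the middle equality being precisely the strong $C_tH^{-1-\delta}_{loc}$ convergence of $\omega_n$); and then invokes the Girault--Raviart characterisation $(H(\Omega))^\perp=\{w\in L^2:\curl w=0\}$ to conclude $v(t_0)=u(t_0)$, whence the full sequence converges.  By contrast, you never use the vorticity hypothesis at all: you obtain equicontinuity of $t\mapsto(u_n(t),\phi)$ for interior solenoidal test fields directly from the weak Navier--Stokes formulation and the $L^\infty_tL^2_x$ energy bound, apply Arzel\`a--Ascoli, identify the limit through the spacetime weak convergence \eqref{wconv:u}, and diagonalise.

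What this buys is worth flagging.  Your approach actually proves a stronger statement: the conclusion \eqref{u:timeslice} follows from the Leray--Hopf structure and the weak spacetime convergence \eqref{wconv:u} alone, with no appeal to Lemma~\ref{lemma2}.  In effect you show that, within the paper's chain of lemmas, the time-slice convergence does not logically require the vorticity compactness.  On the other hand, the paper's proof is shorter, uses the hypothesis it states, and neatly exploits the Biot--Savart-type rigidity encoded in $(H(\Omega))^\perp=\ker\curl$; it also makes transparent why the ``a.e.\ $t$'' (rather than ``every $t$'') appears---namely because $\omega(t_0)=\curl u(t_0)$ only holds almost everywhere.  Your route is more robust and self-contained, but somewhat heavier in bookkeeping (countable density, diagonalisation, Helmholtz upgrade), and, as you yourself note, leaves the lemma's stated hypothesis idle.
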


\begin{proof}
 Fix $t_0\in [0, T]$. From the Leray energy inequality, there exist a subsequence $n_k$ and a function $v(t_0)$ both a priori depending on $t_0$ such that $u_{n_k}(t_0)\wc v(t_0)$ in $H(\Omega)$. Then \bq\label{u:timeslice1}
\om_{n_k}(t_0)={\rm curl}\ u_{n_k}(t_0)\wc{\rm curl}\  v(t_0)\quad\text{in}~H^{-1}(\Omega).
\eq
 On the other hand, the strong convergence \red{ $\omega_n\to \omega$ in $C(0,T;H_{loc}^{-1-\delta}(\Omega))$} implies that $\om_n(t_0)\to \om(t_0)$ in $H^{-1-\delta}_{loc}(\Omega)$. Then necessarily  ${\rm curl}\ v(t_0)=\om(t_0)={\rm curl}\ u(t_0)$ in $\Omega$. According to Corollary 2.9 of \cite{GR12}, 
 \be
(H(\Omega))^\perp:= \{w\in L^2(\Omega), \ {\rm curl} \ w = 0\}.
 \ee
 Since the difference $u(t_0)-v(t_0)\in H\cap H^\perp$, we  conclude that  $v(t_0)=u(t_0)$. Therefore, the whole sequence $u_n(t_0)$ converges weakly to $u(t_0)$ in $L^2(\Omega)$ for a.e. $t_0\in [0, T]$. 
\end{proof}
\begin{rem}
It is crucial in the above proof that a vorticity in $H^{-1}(\Omega)$ determines uniquely a velocity in $H(\Omega)$. Finiteness of kinetic energy is enough.
\end{rem}
Finally, Theorem 3.1 of Constantin and Vicol \cite{CV17} allows us to conclude that $u$ is a weak Euler solution.

\begin{lemma}[Theorem 3.1 of \cite{CV17}]\label{lemma4} 
If condition \eqref{StuctFunc} holds for \red{Leray-Hopf solutions} $u^{\nu_n}$ together with the weak convergence \eqref{u:timeslice} for a.e. time-slice, then $u$ is a weak solution of the Euler equations with force $f$.
\end{lemma}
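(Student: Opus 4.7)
My plan is to pass to the limit $\nu_n \to 0$ in the weak Navier--Stokes formulation \eqref{weakNS} (or \eqref{weakNSfric}) term-by-term for test fields $\varphi \in C_c^\infty((0,T)\times\Omega)$, localized to some open $V \Subset U \Subset \Omega$ containing $\supp\varphi$. The linear term $\int u^{\nu_n}\cdot \partial_t \varphi\,dxdt$ and the force term pass directly from \eqref{wconv:u} and the weak convergence $f_n \wc f$ in $L^2_t L^2_x$. The viscous term is discarded via Cauchy--Schwarz: the energy bound (as in Lemma \ref{lemma1}) gives $\sqrt{\nu_n}\,\|\nabla u^{\nu_n}\|_{L^2_tL^2_x}$ bounded, so $\nu_n \int \nabla u^{\nu_n}:\nabla \varphi \lesssim \sqrt{\nu_n}\|\nabla\varphi\|_{L^2} \to 0$. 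For Navier--friction, the boundary term on the right of \eqref{weakNSfric} is irrelevant since $\varphi$ has compact support in $\Omega$.

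The real work is the nonlinear term. Here weak convergence alone is insufficient, and the plan is a standard mollification argument powered by the Besov bound from Lemma \ref{lemma1}. Let $\rho_\ell$ be a standard spatial mollifier at scale $\ell < \dist(V,\partial U)$, and write $u^{\nu_n}_\ell := u^{\nu_n} * \rho_\ell$ (well-defined on $V$). Decompose
\begin{equation*}
u^{\nu_n}\otimes u^{\nu_n} = u^{\nu_n}_\ell \otimes u^{\nu_n}_\ell + (u^{\nu_n}-u^{\nu_n}_\ell)\otimes u^{\nu_n} + u^{\nu_n}_\ell \otimes (u^{\nu_n}-u^{\nu_n}_\ell).
\end{equation*}
The two error terms are controlled uniformly in $n$ using the Besov estimate $\|u^{\nu_n}(\cdot+r,t)-u^{\nu_n}(\cdot,t)\|_{L^2_tL^2_x(V)} \le C|r|^{\zeta_2/2}$ from Lemma \ref{lemma1}, which yields $\|u^{\nu_n} - u^{\nu_n}_\ell\|_{L^2_tL^2_x(V)} \le C\ell^{\zeta_2/2}$; combined with the uniform $L^\infty_t L^2_x$ bound on $u^{\nu_n}$ and $\|\nabla\varphi\|_{L^\infty}$, each error is $O(\ell^{\zeta_2/2})$ uniformly in $n$. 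The same estimate passes to the weak limit $u$, so $\|u-u_\ell\|_{L^2_tL^2_x(V)} \le C\ell^{\zeta_2/2}$ as well.

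For the main mollified piece with $\ell$ fixed, I will pass to the limit $n\to\infty$ via dominated convergence: by Lemma \ref{lemma3}, $u^{\nu_n}(t)\wc u(t)$ in $L^2(\Omega)$ for a.e.\ $t\in[0,T]$, hence $u^{\nu_n}_\ell(t,x) \to u_\ell(t,x)$ pointwise on $V$ for a.e.\ $t$, while the convolution against $\rho_\ell$ gives a uniform pointwise bound $|u^{\nu_n}_\ell(t,x)| \le \|\rho_\ell\|_{L^2}\|u^{\nu_n}(t)\|_{L^2} \le C_\ell$ in $L^\infty_t$. Dominated convergence then delivers $\int_0^T\!\!\int u^{\nu_n}_\ell \otimes u^{\nu_n}_\ell : \nabla\varphi\,dxdt \to \int_0^T\!\!\int u_\ell\otimes u_\ell:\nabla\varphi\,dxdt$. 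Combining the three steps,
\begin{equation*}
\limsup_{n\to\infty}\Bigl|\int_0^T\!\!\int u^{\nu_n}\otimes u^{\nu_n}:\nabla\varphi - \int_0^T\!\!\int u\otimes u:\nabla\varphi\Bigr| \le C\ell^{\zeta_2/2},
\end{equation*}
and sending $\ell \to 0$ completes the identification.

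The main obstacle is of course the nonlinear flux, and specifically the need to marry two distinct modes of convergence: the time-slice weak convergence from Lemma \ref{lemma3} (which handles the smooth mollified product) and the uniform Besov modulus of continuity from Lemma \ref{lemma1} (which controls the truncation error independently of $n$). The structure function hypothesis \eqref{StuctFunc} is essential precisely here, since it provides the uniform-in-$\nu$ spatial regularity that closes the commutator-type remainder; the time-slice weak convergence alone would be powerless against the quadratic nonlinearity. A minor technical point is ensuring the mollification error is genuinely supported inside $\Omega$: this is guaranteed by choosing $V\Subset U \Subset \Omega$ so that $\supp\varphi \subset V$ and $\ell < \dist(V,\partial U)$, which keeps every quantity local and bypasses any interaction with $\partial\Omega$.
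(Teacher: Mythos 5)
Your proposal is correct, and it essentially reconstructs the proof of Theorem 3.1 in \cite{CV17}, which the paper itself invokes as a black box rather than reproving: mollify the nonlinear flux, control the two commutator-type errors uniformly in $n$ via the Besov bound of Lemma \ref{lemma1} (which, crucially, extends \eqref{StuctFunc} below the dissipative scale $\eta(\nu_n)$ so that all $\ell>0$ are admissible), and pass to the limit in the mollified product using the a.e.-in-time weak convergence \eqref{u:timeslice} together with dominated convergence, with the viscous and boundary terms vanishing for compactly supported solenoidal test fields. I see no gaps; your handling of the inertial-range restriction through Lemma \ref{lemma1} and of the localization $\supp\varphi\subset V\Subset U$, $\ell<\dist(V,\partial U)$ matches exactly how the paper's framework supplies the hypotheses of the cited result.
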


To see that the limiting weak solution inherits some Besov regularity, note that along a subsequence $u_n(\cdot+r,\cdot) - u_n(\cdot,\cdot)\wc u (\cdot+r,\cdot) - u(\cdot,\cdot)$ in $L^2(U\times (0, T))$ for all $|r|< {\rm dist}(U,\partial\Omega)$.  Moreover, if $w_n\wc w$ in $L^2(O)$, then $\|w\|_{L^2(O)} \leq \liminf_{n\to \infty} \|w_n\|_{L^2(O)}$. Passing to the limit in the bound \eqref{StuctFunc} gives 
\bq\label{StuctFuncE}
\int_0^T \int_U |u (x+r,t) - u(x,t)|^2 dx dt  \leq C |r|^{\zeta_2} \qquad   \forall \   |r|< {\rm dist}(U,\partial\Omega),
\eq
and thus $u\in L^2(0,T;B_2^{\zeta_2(U)/2,\infty}(U))$. This concludes the proof of Theorem \ref{theorem}.

\begin{rem}
\red{According to Theorem 1.1 in \cite{LNT},  if a sequence of Leray-Hopf solutions $u_n$ satisfies 
(i) $u_n\in {\rm Lip}(0, T;  H^{-L}_{loc}(\Omega))$ for some large $L>0$, and 
(ii) the sequence $\omega_n=\curl u_n$ is precompact in $C(0, T; H^{-1}_{loc}(\Omega))$,
 then along a subsequence $u_n$ converges to a weak Euler solution $u$  strongly in $L^2(0, T; L^2_{loc}(\Omega))$. The assumptions of our Theorem \ref{theorem} of an $L^2_t$ structure-function bound \eqref{StuctFunc} does not allow us to conclude the latter condition (ii). It was also remarked in \cite{LNT} that it is not clear how to weaken the aforementioned assumption on $\om_n$ to $L^2(0, T; H^{-1}_{loc}(\Omega))$, which our assumptions do grant us.  In contrast, from our Lemma \ref{lemma3}, under the weaker precompactness of $\om_n$ in $C(0, T; H^{-1-\delta}_{loc}(\Omega))$  we obtained the convergence $u_n(t)\to u(t)$ in $L^2(\Omega)$ for almost every $t$. This combined with condition  \eqref{StuctFunc} implies that the limiting function $u$ is a weak Euler solution, by virtue of Constantin \& Vicol's argument, Lemma \ref{lemma4} herein. }
 
\red{We remark finally that if we had assumed the stronger condition that the sequence $u_n$ is uniformly bounded in $L^\infty(0,T;B_2^{\zeta_2/2,\infty}(\Omega))$, then from the precompactness established in our Lemma \ref{lemma2} and interpolation, condition (ii) would hold.  In this case, we could apply Theorem 1.1 of \cite{LNT} as a black box to obtain strong $L_t^2 L^2_{x,loc}$ convergence of the velocity (as a matter of fact, the proof given in \cite{LNT} is only for the case of $\Omega=\mathbb{R}^d$, but the proof works locally on domains since the argument is purely local).  We thank the anonymous referee for this remark. }
\end{rem}

 \section{Proof of Proposition \ref{prop}}
 
 \begin{proof}
 The proof consists of two steps, establishing both implications.
 \vspace{2mm}
 
\noindent \textbf{Step 1:  \eqref{StuctFunc} implies \eqref{vortbd}.}  Let $V\Subset U\Subset \Omega$. By Lemma \ref{lemma1} and the embedding $B^{\zeta_2/2, \infty}_2(U)\subset H^{\zeta_2/2-\ve}(V)$, $\ve\in (0, \zeta_2/2)$ (see Lemma \ref{embed}) we have $u_n$ is uniformly bounded in $H^{\zeta_2/2-\ve}(V)$. Consequently $\om_n$ is uniformly bounded in $H^{-1+\zeta_2/2-\ve}(V)$, proving \eqref{vortbd} with $\delta=\zeta_2/2-\ve>0$.
\vspace{2mm}

\noindent \textbf{Step 2:  \eqref{vortbd} implies  \eqref{StuctFunc}.} 
Let $V\Subset \Omega$ and pick two open sets $U\Supset W\Supset V$ and a scalar cut-off $\chi\in C^\infty_c(U)$ satisfying $\chi=1$ on $W$. Using the fact that $-\Delta u_n=\curl( \om_n)$ in 3D (in 2D, $-\Delta u_n=\nabla^\perp \omega_n$) we have
\begin{align}\nonumber
-\Delta(\chi u_n)&=\chi \curl (\om_n)-2\nabla \chi\cdot \nabla u_n-u_n\Delta\chi\\ \label{ellipticProp} 
&=\curl (\chi \om_n)-\nabla \chi \times \om_n -2\div (\nabla \chi \otimes u_n) + u_n\Delta\chi
\end{align}
where $\times$ denotes the vector cross product. Regard $\chi u_n$, $\chi \om_n$, $\om_n\times \nabla \chi$, $u_n\otimes \nabla \chi$ and $u_n\Delta\chi$ as vector fields defined over $\Rr^d$ taking value $0$ outside $U$. Note that $u_n\in H^1_0(\Omega)$ and $\om_n\in L^2(\Omega)$. Standard elliptic estimates for the problem \eqref{ellipticProp} in $\Rr^d$ yield
\begin{align*}
\Vert \chi u_n\Vert_{L^2(0, T; H^{\delta}(\Rr^d))}&\le C \Vert \curl (\chi \om_n)-\nabla \chi \times \om_n-2\div(\nabla \chi \otimes u_n)+u_n\Delta \chi\Vert_{L^2(0, T;  H^{-2+\delta}(\Rr^d))}\\
&\le C\| \chi \om_n\|_{L^2(0, T;  H^{-1+\delta}(\Rr^d))}+C\|\om_n\times \nabla \chi\|_{L^2(0, T;  H^{-2+\delta}(\Rr^d))}\\
&\qquad +C\|2\nabla \chi \otimes u_n+u_n\Delta\chi\Vert_{L^2(0, T;  L^2(\Rr^d))}\\
&\le C \| \chi \om_n\|_{L^2(0, T;  H^{-1+\delta}(\Rr^d))}+ C\|\om_n\times \nabla \chi\|_{L^2(0, T;  H^{-1+\delta}(\Rr^d))}+C\| u_n\|_{L^2(0, T; L^2(U))}.
\end{align*}
Since $\om_n\in L^2(\Omega)$ we have
\begin{align*}
\Vert \chi \om_n\Vert_{L^2(0, T;  H^{-1+\delta}(\Rr^d))}
&=\sup_{\| g\|_{H^{1-\delta}(\Rr^d)}\le 1}\int_{\Rr^d} \chi \om_n\cdot gdx =\sup_{\| g\|_{H^{1-\delta}(\Rr^d)}\le 1}\int_{U} \chi \om_n\cdot gdx.
\end{align*}
Note that for $\mu\in [0, 1]\setminus \{\frac 12\}$ the bilinear form $(g, h)\mapsto \int_{U} gh$ defines a continuous form on $H^{-\mu}(U)\times H^\mu_0(U)$. Without loss of generality, we assume $\delta\in(0,1/2)$.  Then
\begin{align*}
\Vert \chi \om_n\Vert_{L^2(0, T;  H^{-1+\delta}(\Rr^d))}&\le C_\chi \sup_{\| g\|_{H^{1-\delta}(\Rr^d)}\le 1}\| \om_n\|_{H^{-1+\delta}(U)}\| g\|_ {H^{1-\delta}(U)} \le C_\chi \| \om_n\|_{H^{-1+\delta}(U)}
\end{align*}
where we used the fact that $\| g\|_ {H^{1-\delta}(U)}\le \| g\|_ {H^{1-\delta}(\Rr^d)}$ since $1-\delta>0$. 
An analogous argument shows
\begin{align*}
\|\om_n\times \nabla \chi\|_{L^2(0, T;  H^{-1+\delta}(\Rr^d))}&\le C_\chi \| \om_n\|_{H^{-1+\delta}(U)}.
\end{align*}
Thus, combining the above estimates we have
\[
\Vert \chi u_n\Vert_{L^2(0, T; H^{\delta}(\Rr^d))}\lesssim \| \om_n\|_{H^{-1+\delta}(U)}+\| u_n\|_{L^2(0, T; L^2(U))}
\]
and $u_n$ is uniformly bounded in $H^\delta(W)\subset B^{\delta, \infty}_2(V)$  (see Lemma \ref{embed}). This concludes \eqref{StuctFunc} with $\zeta_2=2\delta$. 
\end{proof}

 \vspace{-2.5mm}
 \section{Discussion}
 
Under the assumption of uniformly-in-viscosity positive local scaling exponents for the second-order structure functions, we have established the weak inviscid limit on arbitrary domains with smooth boundary.  The imposed condition is no stronger than what is required on domains without boundaries \cite{Glimm,Isett17,DE17}  and is consistent with available data of turbulent flow on domains both with and without solid walls. As computing power improves and more experiments are run, the validity of \eqref{StuctFunc} can continue to be checked over longer scale ranges and higher Reynolds numbers.
In this way our results offer, in principle, experimentally refutable statements which can be used to assess the validity of the weak inviscid limit.

 The emergence of weak Euler solutions in the inviscid limit is closely related to anomalous dissipation of energy and Onsager's conjecture.  Anomalous dissipation is the phenomenon of non-vanishing dissipation of energy in the limit of zero viscosity
 \be\label{anom}
\lim_{\nu\to 0} \nu \int_0^T\int_\Omega |\nabla u^\nu(x,t)|^2dxdt >0.
 \ee
There is a vast amount of corroborator evidence for \eqref{anom}.  For numerical simulations of fluid flow on the torus, see e.g. \cite{KRS98,KIYIU03}.  The situation on domains with boundary is similar. For example, see data plotted in \cite{KRS84,PKW02} from wind turbulence experiments, in \cite{C97} more complex geometries and  in 2D numerical simulation with critical Navier-friction conditions \cite{Farge11} (see \cite{DN18} for a more extended discussion of these results).

 The vanishing or not  of the viscous dissipation is closely related to the issue of the strong inviscid limit and the appearance singularities in the solutions.    Clearly, sustaining dissipation in the inviscid limit \eqref{anom} requires divergence of velocity gradients as $\nu\to 0$.    In situations exhibiting anomalous dissipation, convergence to a strong Euler solution cannot hold \cite{K72}.   Onsager conjectured  \cite{O49} that \eqref{anom} could be captured by convergence of $u^\nu$ to a weak solution of the Euler equations which dissipates energy.  Further, he recognized the requirement of a more refined-type of singularity.  In particular, he asserted that dissipative weak Euler solutions must necessarily be less regular than 1/3--H\"{o}lder continuous.    
 
 On domains without boundary, Onsager's assertion has since been proved \cite{GLE94,CET94} and dissipative Euler solutions with less regularity have been constructed \cite{I16,BLSV17}.  See the recent review \cite{E18}.  On domains with boundary, there has been recent work deriving Onsager-type criteria for energy conservation \cite{DN18,BT18,BTW18}, and some constructions of non-conservative Euler solutions exist \cite{W17,B14}.  Thus far, none of the convex integration constructions of dissipative weak Euler solutions in domains either with or without boundary are known to arise as zero viscosity limits of Leray--Hopf weak solutions. \red{In fact, the importance of the Leray-Hopf class is highlighted by the fact that recently weak Euler solutions with arbitrary (in particular, non-monotonic) energy profiles have been constructed as limits of so-called ``very weak" solutions of Navier-Stokes which do not satisfy the energy inequality \cite{BV17}.} A major difference between domains with and without boundary from the point of view of the inviscid limit is the formation of thin, singular boundary layers which may be shed from the solid walls and become a source of anomalous dissipation in the bulk \cite{DN18}. Further understanding these issues remains an active and important area of research.

\appendix
\section{Besov embeddings}
 \begin{lemma}\label{embed}
 Let $U$ and $V$ be two open sets in $\Rr^d$, $d\ge 2$, such that $V\Subset U$. Let $s\in (0, 1)$ and $p\in [1, \infty)$. Then  the following embeddings are continuous
 \begin{itemize}
 \item  $B^{s, \infty}_p(U)\subset W^{s-\ve, p}(V)$ for all $\ve\in (0, s)$,
 \item  $B^{s, \infty}_p(U)\subset L^q(V)$ for all $q\in [1, \frac{dp}{d-sp})$,
 \item  $W^{s, p}(U)\subset B^{s, \infty}_p(V)$. 
 \end{itemize}
 \end{lemma}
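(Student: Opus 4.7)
The plan is to prove the three embeddings in order, exploiting $V\Subset U$, which provides a positive distance $\rho_0:=\dist(V,\partial U)>0$ so that $x+r\in U$ whenever $x\in V$ and $|r|<\rho_0$.

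For the first embedding, let $f\in B^{s,\infty}_p(U)$. The aim is to bound the Slobodeckij seminorm
\[
[f]_{W^{s-\ve,p}(V)}^p = \int_V\int_V \frac{|f(x)-f(y)|^p}{|x-y|^{d+(s-\ve)p}}\,dx\,dy.
\]
Changing variables $r=y-x$ rewrites this as an integral of $\|f(\cdot+r)-f(\cdot)\|_{L^p(V\cap(V-\{r\}))}^p\,|r|^{-d-(s-\ve)p}$ over $r\in\Rr^d$. Since $V\cap(V-\{r\})\subset U\cap(U-\{r\})$, the Besov bound applies and yields the numerator $\le|r|^{sp}\|f\|_{B^{s,\infty}_p(U)}^p$; the resulting integrand is $\lesssim|r|^{\ve p-d}$, which is integrable near zero because $\ve p>0$. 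For $|r|$ exceeding the diameter of $V$ the integrand vanishes, and on the intermediate range one may use the crude bound $2^p\|f\|_{L^p(U)}^p$ on the numerator. The second embedding follows by composing the first with the classical local fractional Sobolev embedding $W^{\sigma,p}(V)\hookrightarrow L^q(V)$, valid for $q\in[1,dp/(d-\sigma p))$ when $\sigma p<d$ (and every $q\in[1,\infty)$ otherwise); setting $\sigma=s-\ve$ and letting $\ve\to 0^+$ recovers the stated range.

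The third embedding is the main technical step. Given $f\in W^{s,p}(U)$, the task is to show $\|f(\cdot+h)-f(\cdot)\|_{L^p(V\cap(V-\{h\}))}\le C|h|^s\|f\|_{W^{s,p}(U)}$ uniformly in $h$. For $|h|\ge \rho_0/2$ this is trivial: $|f(x+h)-f(x)|\le |f(x+h)|+|f(x)|$ gives the $L^p$ norm bounded by $2\|f\|_{L^p(U)}$, and dividing by $|h|^s\ge (\rho_0/2)^s$ is harmless. For $|h|<\rho_0/2$, the plan is to introduce the averaging identity
\[
f(x+h)-f(x)= \frac{1}{|B(0,|h|)|}\int_{B(0,|h|)}\bigl[(f(x+h)-f(x+y))+(f(x+y)-f(x))\bigr]\,dy,
\]
take $L^p(V)$ norm, and apply Jensen's inequality. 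Changing variables $z=x+y$ in one piece and $w=x+h$, $z=x+y$ in the other rewrites each contribution as a double integral of $|f(w)-f(z)|^p\,\mathbf{1}_{|w-z|\lesssim|h|}$ over $U\times U$. On such pairs $|w-z|^{-d-sp}\gtrsim|h|^{-d-sp}$, so each piece is bounded by $|h|^{d+sp}[f]_{W^{s,p}(U)}^p$, and dividing by $|B(0,|h|)|\sim|h|^d$ produces the required factor $|h|^{sp}$. The main obstacle is keeping all the shifted integration regions inside $U$ after the changes of variables; this geometric constraint is precisely what determines the averaging radius $|h|$ and the threshold $|h|<\rho_0/2$ separating the two cases.
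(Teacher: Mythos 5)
Your argument is correct in its essentials but takes a genuinely different route from the paper. The paper's proof rests on a single localization device: fix $\chi\in C_c^\infty(U)$ with $\chi\equiv 1$ on $V$, prove the product estimate $\|\chi u\|_{B^{s,\infty}_p(\mathbb{R}^d)}\lesssim \|\chi\|_{C^s(\mathbb{R}^d)}\|u\|_{B^{s,\infty}_p(U)}$ by splitting $\chi u(\cdot+r)-\chi u(\cdot)$ into the two boundary pieces (where $x$ or $x+r$ exits $U$, controlled by $\|\chi\|_{C^s}\|u\|_{L^p(U)}$) and an interior piece (controlled by the Besov seminorm of $u$), and then quote the classical whole-space embeddings $B^{s,\infty}_p(\mathbb{R}^d)\subset W^{s-\ve,p}(\mathbb{R}^d)\subset L^{dp/(d-(s-\ve)p)}(\mathbb{R}^d)$ and $W^{s,p}(\mathbb{R}^d)=B^{s,p}_p(\mathbb{R}^d)\subset B^{s,\infty}_p(\mathbb{R}^d)$; since $\chi=1$ on $V$, all three bullets follow at once. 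You instead work directly on the domains: the first bullet by estimating the Slobodeckij seminorm over $V$ after the change of variables $r=y-x$ (correct; note the Besov bound in fact covers all $|r|\le \mathrm{diam}(V)$, so your intermediate-range crude bound is not even needed, and this bullet uses only $V\subset U$), and the third bullet by the ball-averaging identity, which is the standard by-hand proof that $p$-integrability of the difference quotient implies the uniform $|h|^s$ modulus, localized to $V\Subset U$. Your geometric bookkeeping there is exactly right: $|h|<\rho_0/2$ and $|y|\le |h|$ keep $x+h$ and $x+y$ in $U$, the changes of variables have Jacobian $1$, and the resulting regions sit inside $\{(w,z)\in U\times U: |w-z|\le 2|h|\}$, where $\mathbf{1}_{|w-z|\le 2|h|}\le (2|h|)^{d+sp}|w-z|^{-d-sp}$ yields the factor $|h|^{d+sp}$, and division by $|B(0,|h|)|\sim |h|^d$ leaves $|h|^{sp}[f]_{W^{s,p}(U)}^p$. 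What your approach buys is self-containedness (nothing beyond Jensen is quoted for bullets one and three); what the paper's buys is brevity and complete insensitivity to the geometry of $V$, since all embeddings are applied on $\mathbb{R}^d$.

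That last point identifies the one step you must patch: the second bullet. You invoke ``the classical local fractional Sobolev embedding $W^{\sigma,p}(V)\hookrightarrow L^q(V)$,'' but for an arbitrary open set $V$ this is false in general: it requires $V$ to be a $W^{\sigma,p}$-extension domain (e.g.\ Lipschitz), whereas the lemma assumes only that $V$ is open with $V\Subset U$, and open sets with sufficiently sharp cusps furnish counterexamples. The fix costs one line and uses precisely the hypothesis $V\Subset U$: cover the compact set $\overline{V}$ by finitely many balls $B_i\Subset U$, set $V'=\bigcup_i B_i$, run your first-bullet argument with $V'$ in place of $V$, and apply the fractional Sobolev embedding ball by ball, so that $\|f\|_{L^q(V)}\le \sum_i \|f\|_{L^q(B_i)}\lesssim \sum_i\|f\|_{W^{\sigma,p}(B_i)}\lesssim \|f\|_{W^{\sigma,p}(V')}$. (Alternatively, adopt the paper's cutoff and use the embedding on $\mathbb{R}^d$, which sidesteps the issue entirely.) Your limiting step is fine as stated: for fixed $q<dp/(d-sp)$ choose $\ve$ small enough that $q<dp/(d-(s-\ve)p)$, with the usual convention that the range is all finite $q$ when $sp\ge d$.
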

\begin{proof}
Fix $\chi\in C^\infty_c(U)$ a cut-off function satisfying $\chi\equiv 1$ on $V$. Consider $u\in B^{s, \infty}_p(U)$ defined by the norm \eqref{besov-def}. We will show that $\chi u\in B^{s, \infty}_p(\Rr^d)$. Indeed, for any $r$ in $\Rr^d$ we have
\begin{align*}
\frac{1}{|r|^{ps}}{\| \chi u(\cdot+r)-\chi u(\cdot)\|_{L^p(\Rr^d)}^p}
&=\frac{1}{|r|^{ps}}\int_{x\in U,~x+r\notin U}|\chi u(x)|^pdx+\frac{1}{|r|^{ps}}\int_{x+r\in U,~x\notin U}|\chi u(x+r)|^pdx\\
&\qquad+\frac{1}{|r|^{ps}}\int_{x,~x+r\in U}|\chi u(x+r)-\chi u(x)|^pdx =: A +B +C.
\end{align*}
$A$ can be bounded by
\[
|A|= \frac{1}{|r|^{ps}}\int_{x\in U,~x+r\notin U}|\chi(x)-\chi(x+r)|^p |u(x)|^pdx \le \| \chi\|_{C^{s}(\Rr^d)}^p\| u\|_{L^p(U)}^p
\]
and similarly for $B$. Regarding $C$ we use the Besov regularity of $u$ to have
\begin{align*}
|C|&\le \frac{1}{|r|^{ps}}\int_{x,~x+r\in U}|\chi(x+r)|^p| u(x+r)- u(x)|^pdx+ \frac{1}{|r|^{ps}}\int_{x,~x+r\in U}|\chi(x+r)-\chi(x)|^p|u(x)|^pdx\\
&\le \| \chi\|_{L^\infty(\Rr^d)}^p\| u\|^p_{B^{s,\infty}_p(U)}+\| \chi\|_{C^{s}(\Rr^d)}^p\| u\|_{L^p(U)}^p.
\end{align*}
Putting together the above estimates we obtain
\bq\label{Bineq}
\| \chi u\|_{B^{s,\infty}_p(\Rr^d)}\le C\| \chi\|_{C^s(\Rr^d)}\| u\|_{B^{s,\infty}_p(U)}.
\eq
Using this together with the embeddings $B^{s,\infty}_p(\Rr^d)\subset W^{s-\ve, p}(\Rr^d)\subset L^{\frac{dp}{d-(s-\ve)p}}(\Rr^d)$, $\ve\in (0, s)$,  and the fact that $\chi=1$ on $V$ we conclude $u\in W^{s-\ve, p}(V)$ and $u\in L^q(V)$ for all $q<\frac{dp}{d-sp}$.

Now let $u\in W^{s, p}(U)$ with $p\in [1, \infty)$ and $s\in (0, 1)$. With $\chi\in C^\infty_c(U)$, $\chi\equiv 1$ on $V$ one can prove as in \eqref{Bineq} that $\chi u\in W^{s, p}(\Rr^d)=B^{s, p}_p(\Rr^d)\subset B^{s, \infty}_p(\Rr^d)$. Therefore, $u\in B^{s, \infty}_p(V)$. 
\end{proof}

 \subsection*{Acknowledgments}  We are grateful to Peter Constantin and Vlad Vicol for useful discussions, \red{as well as the anonymous referee for comments, all of  which greatly improved the paper}.  Research of TD is partially supported by NSF-DMS grant 1703997.

\end{document}